\newtheorem{theorem}{Theorem}[section]
\newtheorem{definition}{Definition}[section]
\newtheorem{lemma}{Lemma}[section]
\newtheorem{corollary}{Corollary}[section]
\newtheorem{conjecture}{Conjecture}[section]
\title{Multi-latin squares}
\author{
Nicholas Cavenagh \\
Department of Mathematics \\
The University of Waikato \\
Private Bag 3105, Hamilton, New Zealand \\
{\texttt{nickc@maths.waikato.ac.nz}} \\
~ \\
Carlo H\"{a}m\"{a}l\"{a}inen \footnote{H\"{a}m\"{a}l\"{a}inen supported by the Eduard Cech Center under the grant LC505.}\\
Department of Mathematics \\
Charles University \\
Sokolovsk\'a 83, 186 75 Praha 8 \\
Czech Republic \\ 
{\texttt{carlo.hamalainen@gmail.com}} \\
~\\
James G. Lefevre\footnote{Lefevre supported by grants DP0664030 and LX0453416.}\\
Centre for Discrete Mathematics and Computing \\
University of Queensland \\
St Lucia 4072 Australia \\
{\texttt{jgl@maths.uq.edu.au}} \\
~\\
Douglas S. Stones\footnote{Supported by ARC Grant DP0662946}\\
School of Mathematical Sciences \\
Monash University \\
Vic 3800 Australia
}
\date{}
\begin{document}

\maketitle\thispagestyle{empty}
\def\baselinestretch{1.0}\small\normalsize
\def \l {\lambda}
\def \mod {\mbox{ mod }}
\parskip=2mm

\begin{abstract}
A multi-latin square of order $n$ and index $k$ is an $n\times n$
array of multisets, each of cardinality $k$, such that each symbol from a
fixed set of size $n$ occurs $k$ times in each row and $k$ times in
each column.  A multi-latin square of index $k$ is also referred to
as a $k$-latin square.  A $1$-latin square is equivalent to a latin
square, so a multi-latin square can be thought of as a generalization
of a latin square.

In this note we show that any partially filled-in $k$-latin square of
order $m$ embeds in a $k$-latin square of order $n$, for each $n\geq
2m$, thus generalizing Evans' Theorem.  Exploiting this result, we
show that there exist non-separable $k$-latin squares of order $n$
for each $n\geq k+2$.
We also show that for each $n\geq 1$, there exists some finite 
value
$g(n)$ such that for all $k\geq g(n)$, every $k$-latin square of order $n$ is separable. 

We discuss the connection between $k$-latin squares and related
combinatorial objects such as orthogonal arrays, latin parallelepipeds,
semi-latin squares and $k$-latin trades.  We also enumerate and
classify $k$-latin squares of small orders.

\vspace{2mm}
{\it Keywords: latin square, multi-latin square, orthogonal array,
 semi-latin square, SOMA, latin parallelepiped.}
\end{abstract}

\section{Introduction}\label{sec1}

For each positive integer $a$, we use the notation $N(a)$ for the
set of positive integers $\{1,2,\dots ,a\}$.  The operations and relations 
used in this paper always take into account the multiplicity of
symbols in multisets.  For example, if $A$ and $B$ are multisets,
and $A$ contains $t_1$ occurrences of symbol $x$ and $B$ contains
$t_2$ occurrences of symbol $x$, then $A\cup B$ contains $t_1+t_2$
occurrences of symbol $x$.  Similarly, $A\subseteq B$ if and only
if for each symbol $x\in A$ that occurs $t_1$ times in $A$ and $t_2$
times in $B$, $t_1\leq t_2$.

A {\em partial $k$-latin square} of order $n$ is an $n\times n$
array, where each cell of the array contains a multiset of cardinality at most $k$ 
with symbols from $N(n)$, such that each symbol occurs at
most $k$ times in each row and at most $k$ times in each column.
A {\em $k$-latin square} is a partial $k$-latin square in which
each cell contains exactly $k$ symbols, and hence each symbol
occurs precisely $k$ times in each row and $k$ times in each column.
We sometimes refer to (partial) $k$-latin squares as {\em (partial)
multi-latin squares}, of {\em index} $k$.

Below is a $2$-latin square of order $4$:
$$\begin{array}{|c|c|c|c|}
\hline
1,2 & 1,2 & 3,3 & 4,4 \\
\hline
2,3 & 2,3 & 4,4 & 1,1 \\
\hline
1,4 & 3,4 & 1,2 & 2,3 \\
\hline
3,4 & 1,4 & 1,2 & 2,3 \\
\hline
\end{array}$$

Note that a (partial) $1$-latin square is equivalent to a (partial) latin
square. Thus multi-latin squares may be thought of as generalizations of latin squares. 

A (partial) $k$-latin square is said to be {\em simple} if each cell
contains no repeated symbols. So multisets are forbidden in simple partial 
$k$-latin squares.

For a (partial) $k$-latin square $L$ of order $n$, and for any $i,j\in
N(n)$, write $L(i,j)=A$ whenever cell $(i,j)$ of $L$ contains
the multiset $A$.  We may thus consider a (partial) $k$-latin square
as a set of ordered triples of the form $(i,j,L(i,j))$, where in some
cases $L(i,j)$ may be equal to the empty set.

In this sense, a (partial) $k$-latin square $L$ of order $n$ can be
thought of as a subset of a (partial) $k$-latin square $L'$ of order
$n$ if and only if for each $i,j\in N(n)$, $L(i,j)\subseteq L'(i,j)$.

We begin with some straightforward existence lemmas.
\begin{lemma}
For all positive integers $n$ and $k$, there exists a $k$-latin square 
of order $n$.
\end{lemma}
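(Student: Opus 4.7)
The plan is to construct such a square very directly, since the hypothesis is as weak as possible. A standard latin square of order $n$ always exists (for instance, the Cayley table of the cyclic group $\mathbb{Z}_n$, where cell $(i,j)$ holds the symbol $i+j \bmod n$). Once we have one latin square $L$ of order $n$, the simplest thing to do is inflate each entry into a multiset of size $k$ by repetition.

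Concretely, I would define $L'(i,j)$ to be the multiset containing the symbol $L(i,j)$ with multiplicity $k$, for every $i,j \in N(n)$. Then each cell has cardinality $k$ by construction, and since $L$ is a latin square each symbol appears exactly once in each row and column of $L$, so it appears exactly $k$ times in each row and each column of $L'$. This matches the definition of a $k$-latin square of order $n$ verbatim.

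Alternatively (and just as easily), one can take any $k$ latin squares $L_1,\dots,L_k$ of order $n$ and form their multiset union cell by cell; the resulting array is a $k$-latin square, since each $L_i$ contributes exactly one occurrence of each symbol per row and per column. The first construction is cleanest because it only requires the existence of a single latin square of order $n$.

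There is no real obstacle here: the only fact needed from outside is the existence of a latin square of order $n$, which is immediate from the group table of $\mathbb{Z}_n$. The lemma should be stated and dispatched in a couple of lines, and the only care required is to keep the multiset convention in mind, namely that the $k$ identical copies of $L(i,j)$ in $L'(i,j)$ are genuinely counted with multiplicity when checking the row and column conditions.
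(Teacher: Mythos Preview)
Your proof is correct and matches the paper's own argument almost verbatim: take any latin square $L$ of order $n$ and replace each entry by the multiset consisting of $k$ copies of that symbol. The alternative you mention (superimposing $k$ latin squares) is also valid but unnecessary here.
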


\begin{proof}
Let $L$ be a latin square of order $n$. 
Let $L_k$ be the $k$-latin square of order $n$, 
 where 
for each $i,j\in N(n)$,
$L_k(i,j)$ is the multiset consisting of $k$ copies of 
the symbol in cell $(i,j)$ of $L$.
\end{proof}

Throughout this paper, for any integer $x$ and positive integer $n$,
we define $(x \mod n)$ to be the unique member of $N(n)$ which is
congruent to $x$ modulo $n$.

\begin{lemma}
For all positive integers $n$ and $k$, there exists a simple $k$-latin square 
of order $n$ if and only if $n\geq k$.
\end{lemma}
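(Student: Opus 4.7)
The forward direction is immediate: each cell of a simple $k$-latin square of order $n$ contains $k$ pairwise distinct symbols drawn from $N(n)$, so $k\leq n$ is a necessary condition.

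For the converse, suppose $n\geq k$. The plan is to superimpose $k$ cyclic latin squares whose entries in every cell are pairwise distinct, taking the union of those entries as the multiset in each cell. Concretely, I would define an $n\times n$ array $L$ by
\[
L(i,j)=\{\,(i+j+\ell)\mod n : 0\leq \ell\leq k-1\,\}
\]
for each $i,j\in N(n)$. Since $k\leq n$, the $k$ values $(i+j+\ell)\mod n$ for $\ell=0,1,\dots,k-1$ are pairwise distinct, so $L(i,j)$ has cardinality exactly $k$ and is multiset-free, establishing simplicity and the correct cell size.

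It then remains to check the row and column frequency conditions. Fix a row $i$ and a symbol $s\in N(n)$. The symbol $s$ occurs in cell $(i,j)$ precisely when $j\equiv s-i-\ell\pmod n$ for some $\ell\in\{0,\dots,k-1\}$; the $k$ choices of $\ell$ produce $k$ distinct columns $j\in N(n)$, so $s$ appears exactly $k$ times in row $i$. The argument for columns is identical by the symmetry of the defining formula in $i$ and $j$.

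There is no substantial obstacle here: the only place $k\leq n$ is used is to guarantee that the $k$ residues $(i+j),(i+j+1),\dots,(i+j+k-1)$ are distinct modulo $n$, which is exactly the hypothesis. Thus the construction yields a simple $k$-latin square of order $n$ whenever $n\geq k$, completing the equivalence.
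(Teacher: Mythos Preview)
Your proof is correct and follows essentially the same approach as the paper: both use the cyclic construction $L(i,j)=\{(i+j+\ell)\bmod n:0\le\ell\le k-1\}$ and observe that $n\ge k$ guarantees these $k$ residues are distinct. You supply more explicit verification of the row and column conditions than the paper does, but the construction and reasoning are identical.
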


\begin{proof}
If $n<k$, then we are forced to have at least one repeated symbol in a cell, contradicting 
the simple criterion. For each $n\geq k$,
we define a simple $k$-latin square $L_k$ of order $n$ as follows. 
For each $i,j\in N(n)$,
\[
L_k(i,j)= 
\{ (i+j \mod n), (i+j+1 \mod n), 
  \dots , (i+j+k-1 \mod n) \}.\]
\end{proof}

We explore the relationship between multi-latin squares and other
combinatorial configurations in Section~\ref{sec2}.  In Section~\ref{sec3}, we examine
whether standard embedding theorems on latin squares generalize to
multi-latin squares.  In Section~\ref{sec4}, we explore when multi-latin
squares ``separate'' into other multi-latin squares.  Making use
of the embedding results from Section~\ref{sec3}, we show the existence of
non-separable $k$-latin squares of order $n$ for each $n\geq k+2$.
Then in Section~\ref{sec5} we count and classify $k$-latin squares of order $n$
for small values of $k$ and $n$.
Finally in Section 6 we mention some possible applications of multi-latin squares to experimental design. 

\section{Equivalences and Connections}\label{sec2}

Multi-latin squares are equivalent to at least three other types of combinatorial objects,
and are related to many more. 

There is an equivalence between multi-latin squares and certain 
 {\em orthogonal arrays}. 
\begin{definition}
An {\em orthogonal array} of {\em size} $N$, with $m$ constraints, $q$
levels, strength $t$, and index $\lambda$, denoted by
$\textnormal{OA}_{\lambda}(N, m, q, t)$ is an $m \times N$ array with entries
from $\mathbb{Z}_q$, such that in every $t \times N$ subarray, every
tuple in $\mathbb{Z}_q^t$ appears exactly $\lambda = N/q^t$ times.
\end{definition}
Observe that a $k$-latin square of order $n$ is equivalent to an orthogonal array
$\textnormal{OA}_k(kn^2, 3, n, 2)$.

The research on orthogonal arrays tends to deal with the existence 
of orthogonal arrays with particular parameters and properties \cite{GRC}. 
As we saw in the previous section, it is trivial to verify 
the existence of $k$-latin squares of order $n$ and hence 
the existence of an
$\textnormal{OA}_k(kn^2, 3, n, 2)$, for each positive $k$ and $n$.

We can also define multi-latin squares in terms of graph decompositions. 
It is well-known that a latin square is equivalent to the decomposition of the edges of a complete tripartite graph $K_{n,n,n}$ into triangles.
This equivalence has a natural extension to multi-latin squares.
We define $kK_{n,n,n}$ to be the multi-graph obtained by replacing each edge of $K_{n,n,n}$ with $k$ parallel edges. 
Then 
a $k$-latin square of order $n$ is equivalent to a decomposition of the edges of
$kK_{n,n,n}$ into triangles. 

From this equivalence, it is immediate that any graph automorphism of 
$K_{n,n,n}$ may be applied to a $k$-latin square $L_1$ of order $n$ to
obtain another $k$-latin square $L_2$ of order $n$ which is combinatorially equivalent to $L_1$. 
We say that $L_1$ and $L_2$ with these properties are {\em paratopic} or belong to the same {\em main class} or {\em species}. This extends, in a natural way, existing definitions which are used to describe latin squares. 

Anderson and Hilton \cite{AnHi} define an ({\em exact}) $(p,q,x)$-latin rectangle to be 
a rectangular matrix with $x$ symbols in each cell such that each symbol occurs at most
({\em exactly}) $p$ times in each row and at most ({\em exactly}) $q$ times in each column. 
Thus a $k$-latin square is equivalent to an exact $(k,k,k)$-latin rectangle. Some embedding results for $(p,q,x)$-latin rectangles are given in \cite{AnHi}. 

Next we discuss non-equivalent but related objects to 
multi-latin squares.
A 
$(n\times n)/k$ 
{\em semi-latin} square is 
an $n\times n$ array $A$, whose entries are $k$-subsets of
a $N(kn)$, such that each element of $N(kn)$ occurs exactly once in each row and once in each column of $A$. (These objects are referred to as 
$n\times n$ $r$-multi latin squares in \cite{KuDe,KuDe2}). 
Semi-latin squares have been studied extensively, particularly in terms 
of experimental design. A summary of results on semi-latin squares may be found at \cite{Bailey2}; a list of enumerative results in \cite{BaCh}.  
An embedding theorem for semi-latin squares is given in \cite{KuDe2}.
If no pair of letters occurs more than once in each cell,
the  semi-latin square is called {\em simple}, and is equivalent to a 
SOMA or {\em Simple Orthogonal Multi-Array} (see \cite{Arhin}, \cite{Soi}, \cite{Soi2}). 

Clearly a $(n\times n)/k$ semi-latin square is a distinct concept to a $k$-latin square of order $n$. 
However there are some connections between these two combinatorial objects.
Let $f$ be a function $f:N(kn)\rightarrow N(n)$ such that 
each element of $N(n)$ has a pre-image of size $k$.  
Then, $f$, when applied to the symbol set, maps any 
$(n\times n)/k$ semi-latin square 
to a $k$-latin square of order $n$.

In fact, there is a reverse process. 
Let $L$ be a $k$-latin square of order $n$. 
Given a symbol $x\in N(n)$, construct
a bipartite multi-graph $B_x$ with $t$ edges between
vertices $r_i$ and $c_j$ if and only if $x$ occurs precisely
$t$ times in cell $L(i,j)$. 
Evans' theorem and the rules of a $k$-latin square guarantee that the edges of $B_x$ partition into $k$ pairwise-disjoint perfect matchings.
Thus, given any inverse $f^{-1}:N(n)\rightarrow N(kn)$  
to the function $f$ above, we obtain a $(n\times n)/k$ semi-latin square.  
(Of course $f^{-1}$ is not a well-defined function.)

However, this relation between multi-latin squares and semi-latin
squares is not, in general, one of correspondence.
For example, consider the following $(3\times 3)/2$ semi-latin square:
$$
\begin{array}{|c|c|c|}
\hline
1,2 & 3,4 & 5,6 \\
\hline
3,5 & 6,1 & 4,2 \\
\hline
4,6 & 5,2 & 3,1 \\
\hline
\end{array}
$$
The maps $f_1,f_2:N(6)\rightarrow N(3)$ defined by 
$f_1(1)=f_1(2)=1$,  
$f_1(3)=f_1(4)=2$,  
$f_1(5)=f_1(6)=3$,  
$f_2(1)=f_2(6)=1$,  
$f_2(2)=f_2(3)=2$,  
$f_2(4)=f_2(5)=3$ give rise to 
$2$-latin squares of order $3$, $L$ and $L'$:
  
$$
\begin{array}{|c|c|c|}
\hline
1,1 & 2,2 & 3,3 \\
\hline
2,3 & 1,3 & 1,2 \\
\hline
2,3 & 1,3 & 1,2 \\
\hline
\multicolumn{3}{c}{L} 
\end{array}
\quad
\begin{array}{|c|c|c|}
\hline
1,2 & 2,3 & 3,1 \\
\hline
2,3 & 1,1 & 2,3 \\
\hline
3,1 & 2,3 & 1,2 \\
\hline
\multicolumn{3}{c}{L'} 
\end{array}$$
The $2$-latin squares $L$ and $L'$ are  
not paratopic. 

Just as the differences between latin squares are defined by {\em latin trades} \cite{Cavenagh}, the differences between multi-latin squares gives rise to a type of combinatorial trade. Such trades would include the $t-(v,k)$ latin trades recently introduced in \cite{KhMaMo} and \cite{MaNa}.

It would be an intriguing line of research to further explore the relationship between multi-latin squares, semi-latin squares and SOMAs.

\section{Embeddings}\label{sec3}

For $1\leq m\leq n$, we define an {\em $m\times n$ $k$-latin rectangle} to be a partial $k$-latin square of order $n$, of which 
$m$ rows are filled and the remaining rows are empty.
Evans' theorem tells us that any partial latin rectangle may be extended to a latin square \cite{Ev}; this result can be generalized to $k$-latin rectangles as follows.

\begin{lemma}
Any $m\times n$ $k$-latin rectangle embeds in a $k$-latin
square of order $n$.
\label{fillrows}
\end{lemma}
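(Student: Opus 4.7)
The plan is to generalize the bipartite matching proof of the classical Evans/Ryser embedding. I will build a bipartite multigraph that encodes the symbols still needed in each column and use König's edge-coloring theorem to fill in the missing rows one matching at a time.

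More concretely, let $R$ be the $m\times n$ $k$-latin rectangle. For each column $j\in N(n)$ and symbol $x\in N(n)$, let $d_j(x)$ denote the number of occurrences of $x$ in column $j$ of $R$; by definition of a partial $k$-latin square, $0\le d_j(x)\le k$. Define a bipartite multigraph $H$ with column-vertices $c_1,\dots,c_n$ and symbol-vertices $s_1,\dots,s_n$, where $c_j$ is joined to $s_x$ by exactly $k-d_j(x)$ parallel edges. Since $R$ has $m$ filled rows, the degree of each $c_j$ is $\sum_{x}(k-d_j(x)) = nk - mk = (n-m)k$, and by a symmetric count the degree of each $s_x$ is also $(n-m)k$. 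Thus $H$ is an $(n-m)k$-regular bipartite multigraph.

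The key step is to decompose $H$ into $n-m$ sub-multigraphs, each corresponding to a valid new row. By König's edge-coloring theorem, the edges of $H$ can be properly colored with $(n-m)k$ colors; as $H$ is $(n-m)k$-regular, each color class is a perfect matching between columns and symbols. I will then group these $(n-m)k$ matchings into $n-m$ batches $M_1,\dots,M_{n-m}$ of $k$ matchings each. The union of the $k$ matchings in a batch $M_i$ is a $k$-regular bipartite multigraph on $\{c_1,\dots,c_n\}\cup\{s_1,\dots,s_n\}$, which I interpret as a new row: in cell $(m+i,j)$ place the multiset of symbols $x$ such that $c_j s_x$ appears in $M_i$ (with multiplicity). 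Each column then receives exactly $k$ symbols and each symbol appears $k$ times across the row, so the new row is a valid $k$-latin row; and by construction the completed square has each symbol occurring exactly $k$ times in every column.

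The main obstacle, though not a deep one, is justifying the decomposition of $H$ into perfect matchings in the multigraph setting: this is just König's theorem applied to bipartite multigraphs, whose chromatic index equals the maximum degree, so each of the $(n-m)k$ color classes is automatically a perfect matching by regularity. Once that is in hand, the batching into rows is purely bookkeeping, and the classical $k=1$ case recovers the Hall/Ryser proof of Evans' theorem.
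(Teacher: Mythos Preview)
Your proof is correct and follows essentially the same bipartite-matching strategy as the paper: both encode the symbols still needed in each column and extract perfect matchings between columns and symbols to fill the remaining rows. The only difference is packaging---the paper iterates Hall's theorem one matching at a time, re-verifying the marriage condition after each deletion, whereas you invoke K\"onig's edge-coloring theorem once on the $(n-m)k$-regular deficiency multigraph to obtain all $(n-m)k$ matchings simultaneously.
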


\begin{proof}
Let $P$ be an $m\times n$ $k$-latin rectangle.
For each $i\in N(n)$, let $D_0(i)$ be the multiset containing $k-z_i(e)$ copies of symbol $e$, $1\leq e\leq n$, where
$z_i(e)$ is the number of occurrences of $e$ in column $i$ of $P$.
Observe that each $D_0(i)$ has size $k(n-m)$, and each symbol $e$ occurs $k(n-m)$ times in sets of the form $D_0(i)$. 

We now iteratively define multisets 
$D_x(1),\dots ,D_x(n)$, for each $x$ such that $1\leq x <k(n-m)$. 
We assume that for a given $p$, where $0\le p < k(n-m)$, we can choose a symbol from each of the multisets 
$D_p(1),\dots ,D_p(n)$, so that the $n$ symbols chosen are all distinct.
We then remove the selected symbols to obtain $D_{p+1}(1),\dots ,D_{p+1}(n-m)$.
For each $i\in N(n)$, we add the selected symbol from $D_p(i)$ to cell 
$(m+1+\lfloor p/k \rfloor,i)$ of $P$; in this way we obtain a completion of $P$ to a $k$-latin square.

We now justify our inductive assumption.  
By Hall's Theorem, it suffices to show that the union of any $s$-subset of 
$\{D_p(1),\dots ,D_p(n)\}$ contains at least 
$s$ distinct symbols.
Recall that each of the symbols
from $N(n)$ occurs a total of $k(n-m)$ times in the multisets 
$\{D_0(1),\dots ,D_0(n)\}$ 
and that each of these multisets has cardinality $k(n-m)$.
At each stage of the iteration we remove one copy of each of the $n$ symbols (one element from each multiset), 
hence by induction each of the $n$ symbols will occur
a total of $k(n-m)-p$ times in the multisets 
$\{D_p(1),\dots ,D_p(n)\}$  
and each of these
multisets will have cardinality $k(n-m)-p$.
Thus any $s$-subset of these multisets will contain a total of $s(k(n-m)-p)$ elements, with each symbol
occurring at most $k(n-m)-p$ times, and so must contain at least $s$ distinct symbols.
\end{proof}

If we restrict ourselves to simple $k$-latin squares,  
we do not have a direct equivalent of the above lemma. 
For example, the following $2\times 3$ $2$-latin rectangle has no completion to a simple $2$-latin square of order $3$. 
\[
\begin{array}{|c|c|c|}
\hline
1,2 & 1,3 & 2,3 \\
\hline
1,2 & 1,3 & 2,3 \\
\hline
\end{array}
\]
It remains an open problem to determine under what conditions a simple multi-latin rectangle can be extended to 
a simple multi-latin square. 

\begin{theorem}
Any partial $k$-latin square $P$ of order $m$ embeds in a $k$-latin
square of order $n$,
for each $n\geq 2m$.
\label{embed1}
\end{theorem}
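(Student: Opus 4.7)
The plan is to reduce the statement to Lemma~\ref{fillrows} by first extending $P$ to an $m\times n$ $k$-latin rectangle $R$, and then applying Lemma~\ref{fillrows} to complete $R$ to a $k$-latin square of order $n$. The crux of the argument is the construction of $R$, which is where the hypothesis $n\geq 2m$ is used.

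I would construct $R$ in two stages. First, I complete the $m\times m$ block so that each cell contains exactly $k$ symbols, using only symbols drawn from $\{m+1,\ldots,2m\}$. This set is available because $n\geq 2m$, and, crucially, it is disjoint from $N(m)$, so no row or column multiplicities of symbols already in $P$ are disturbed. Letting $x_{ij}^{s}$ denote the number of copies of symbol $s$ to be added to cell $(i,j)$, I seek nonnegative integers satisfying $\sum_{s\in\{m+1,\ldots,2m\}}x_{ij}^{s}=k-|P(i,j)|$ for each cell, and $\sum_{j}x_{ij}^{s}\leq k$ and $\sum_{i}x_{ij}^{s}\leq k$ for each row or column. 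The uniform fractional choice $x_{ij}^{s}=(k-|P(i,j)|)/m$ satisfies all these constraints because the total number of holes per row or column is at most $km$; since the underlying constraint matrix is totally unimodular (a transportation polytope), an integer solution also exists.

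Second, I extend the completed $m\times m$ block to an $m\times n$ $k$-latin rectangle by filling cells $(i,j)$ with $i\in N(m)$ and $m<j\leq n$. Each such cell requires $k$ symbols, chosen so that each row of $R$ contains every symbol of $N(n)$ exactly $k$ times and each column at most $k$ times. Here the slack provided by the $n-m\geq m$ new columns again permits a uniform fractional solution (distribute each symbol's row deficit evenly across the new columns); integrality follows as before. Applying Lemma~\ref{fillrows} to the resulting rectangle $R$ then yields a $k$-latin square of order $n$ containing $P$.

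The main obstacle is the feasibility verification in the first stage: one must check that the $m$ fresh symbols in $\{m+1,\ldots,2m\}$ provide enough room to absorb all the holes of $P$ without violating the $k$-latin row or column bounds. This is exactly where the hypothesis $n\geq 2m$ is essential, and it is also the step most easily recast as an iterated Hall's theorem argument if one preferred a purely combinatorial treatment.
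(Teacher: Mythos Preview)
Your overall architecture is precisely the paper's: complete the $m\times m$ block using only the fresh symbols $m+1,\ldots,2m$, extend the first $m$ rows across columns $m+1,\ldots,n$ to get an $m\times n$ $k$-latin rectangle, and then invoke Lemma~\ref{fillrows}. That reduction is sound and is exactly where the hypothesis $n\ge 2m$ enters.

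The gap is in your integrality argument. In both stages your variables $x_{ij}^{s}$ carry three indices, and each variable appears in exactly one ``cell'' constraint $\sum_s x_{ij}^s=\cdots$, one ``row--symbol'' constraint $\sum_j x_{ij}^s\le k$, and one ``column--symbol'' constraint $\sum_i x_{ij}^s\le k$. This is the axial $3$-index transportation system, not a $2$-index transportation polytope, and its constraint matrix is \emph{not} totally unimodular (it is the standard setting behind $3$-dimensional matching). So the existence of your uniform fractional solution does not, by itself, yield an integer filling; the appeal to TU is unjustified in both stages.

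The paper avoids this issue by working constructively. For the $m\times m$ block it writes down an explicit cyclic fill,
\[
P_0(i,j)=P(i,j)\cup\{(i+j\bmod m)+m,\ldots,(i+j+k-|P(i,j)|-1\bmod m)+m\},
\]
rather than appealing to any polyhedral argument. For the extension across columns $m+1,\ldots,n$ it does \emph{not} try to place all $n$ symbols simultaneously via a flow; instead it builds the rectangle one entry at a time using Hall's theorem, padding the system with auxiliary multisets $B_0(1),\ldots,B_0(n-m)$ so that at every step there are $n$ multisets, each of the same size and with each symbol equidistributed, which makes the Hall condition automatic. If you want to keep a polyhedral flavour, your first stage can be repaired by an edge-colouring argument (view $d_{ij}=k-|P(i,j)|$ as a bipartite multigraph of maximum degree at most $km$, properly $km$-edge-colour it, and group the colour classes into $m$ blocks of $k$), but the second stage really needs either the paper's Hall-with-dummies trick or an equally careful substitute.
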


\begin{proof}
Let $P_0$ be the partial $k$-latin square of order $n$ defined by
$P_0(i,j)=\emptyset$, if $i>m$ or
$j>m$, and otherwise
\begin{eqnarray*}
P_0(i,j) &=& P(i,j)\cup 
\{ (i+j \mod m)+m, (i+j+1 \mod m)+m, \\
 && \dots , (i+j+k-|P(i,j)|-1 \mod m)+m \}.
\end{eqnarray*}

We now wish to complete rows $1$ through to $m$.
For each $i\in N(m)$, let $X(i)$ be the multiset
$\bigcup_{j=1}^m P_0(i,j)$.
Define $A_0(i)$ to be the multiset containing $k-x_i(e)$ copies of symbol
$e$, $1\le e\le n$,
where $x_i(e)$ is the number of occurrences of $e$ in $X(i)$.
Define $B_0(i)$ to be the multiset consisting of $X(i)$ together with $k(n-2m)$ copies of symbol $i$.
We also define $B_0(i)$ for $m+1\le i \le n-m$; in this case we let $B_0(i)$ be the multiset consisting of 
$k$ copies of each of the symbols $m+1, m+2, \dots , n$.
Note that for $1\le i \le m$ and $1\le j \le n-m$,
$|A_0(i)|=|B_0(j)|=k(n-m)$, and that each of the symbols
from $N(n)$  occurs a total of $k(n-m)$ times in the multisets $A_0(1),\dots ,A_0(m),B_0(1),\dots ,B_0(n-m)$.
The multiset $A_0(i)$ represents the symbols available to complete row
$i$, while $B_0(i)$ helps to ensure completion of the construction but will be discarded.

We now iteratively define a partial $k$-latin square $P_x$ and multisets \newline 
$A_x(1),\dots ,A_x(m),B_x(1),\dots ,B_x(n-m)$, 
for $1\le x \le k(n-m)$.
We assume that for a given $p$, where $0\le p < k(n-m)$, we can choose a symbol from each of the multisets 
$A_p(1),\dots ,A_p(m),B_p(1),\dots ,B_p(n-m)$, so that the $n$ symbols chosen are all distinct.
We then remove the selected symbols to obtain $A_{p+1}(1),\dots ,A_{p+1}(m),B_{p+1}(1),\dots ,B_{p+1}(n-m)$.
For each $i\in N(m)$, We add the selected symbol from $A_p(i)$ to cell 
$(i,m+1+\lfloor p/k \rfloor)$ of $P_p$; in this way we obtain $P_{p+1}$.

We now justify the assumption that for a given $p$, where $0\le p < k(n-m)$, 
we can choose a symbol from each of the multisets \newline
$A_p(1),\dots ,A_p(m),B_p(1),\dots ,B_p(n-m)$, so that the $n$ symbols chosen are all distinct.
By Hall's Theorem, it suffices to show that the union of any $s$-subset of 
$\{A_p(1),\dots ,A_p(m),B_p(1),\dots ,B_p(n-m)\}$ contains at least 
$s$ distinct symbols.
Recall that each of the symbols
from $N(n)$ occurs a total of $k(n-m)$ times in the multisets $A_0(1),\dots ,A_0(m),B_0(1),\dots ,B_0(n-m)$,
and that each of these multisets has cardinality $k(n-m)$.
At each stage of the iteration we remove one copy of each of the $n$ symbols (one element from each multiset), 
hence by induction each of the $n$ symbols will occur
a total of $k(n-m)-p$ times in the multisets $A_p(1),\dots ,A_p(m),B_p(1),\dots ,B_p(n-m)$, and each of these
multisets will have cardinality $k(n-m)-p$.
Thus any $s$-subset of these multisets will contain a total of $s(k(n-m)-p)$ elements, with each symbol
occurring at most $k(n-m)-p$ times, and so must contain at least $s$ distinct symbols.

We have thus shown that the iteration is well defined, and so we obtain a $m\times n$ $k$-latin rectangle $P_{k(n-m)}$ 
which contains $k$ elements in each of the cells in the first $m$ rows, and in which the orginal 
partial $k$-latin square $P$ is embedded.
Finally, to complete rows $m+1$ through to $n$, we simply apply Lemma 
\ref{fillrows}. 
\end{proof}

In fact, the bound in Theorem \ref{embed1} is the best possible. To see this, 
observe that any $k$-latin square of order $m$ cannot be embedded 
in a $k$-latin square of order less than $2m$.
Since our proof of Theorem \ref{embed1} relies on Lemma \ref{fillrows}, 
it does not extend immediately to the case of simple multi-latin squares. 
However we conjecture the following.
\begin{conjecture}
\label{conj1}
For each positive integer $k$ and for each positive integer $m$, 
there exists some finite value $n(m,k)$ such that for any $n\geq n(m,k)$, 
any {\em simple} partial $k$-latin square $P$ of order $m$ embeds in a 
{\em simple} 
$k$-latin
square of order $n$.
\end{conjecture}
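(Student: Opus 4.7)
The plan is to mimic the two-stage structure of Theorem~\ref{embed1}: first pad the $m\times m$ block of $P$ so that every cell contains exactly $k$ distinct symbols, then complete the first $m$ rows across columns $m+1,\dots,n$, and finally extend to a simple $k$-latin square of order $n$ using a simple-preserving analogue of Lemma~\ref{fillrows}. The whole argument rests on proving this simple analogue of Lemma~\ref{fillrows}, which is where the real work lies.

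\emph{Stage 1 (padding the $m\times m$ block).} Fix $n$ large compared to $m$ and $k$, and work over the symbol set $N(n)$. For each cell $(i,j)\in N(m)\times N(m)$ with $|P(i,j)|=\ell_{ij}<k$, append $k-\ell_{ij}$ pairwise distinct auxiliary symbols drawn from $\{m+1,\dots,n\}$, so that each auxiliary symbol's multiplicity in each row and each column stays at most $k$. Since $\{m+1,\dots,n\}$ has cardinality $n-m\gg mk$, a greedy or row-wise bipartite-matching argument produces such a padding, yielding a simple partial $k$-latin square of order $n$ whose top-left $m\times m$ subgrid has every cell filled with $k$ distinct symbols.

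\emph{Stages 2 and 3 (rectangular completion, then square completion).} Mimicking the iterative proof of Lemma~\ref{fillrows}, extend first across columns $m+1,\dots,n$ in the first $m$ rows, and then into the bottom $n-m$ rows. At iteration $p$ we choose, for each index $i$ playing the role of a column in Lemma~\ref{fillrows}, one symbol from an availability multiset $D_p(i)$ of cardinality $k(n-m)-p$, subject now to the \emph{additional} requirement that the chosen symbol not already appear in the cell currently being filled. Hall's theorem must therefore be verified after deleting, from each $D_p(i)$, a cell-exclusion list of up to $k-1$ symbols.

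\emph{Main obstacle.} The delicate point is maintaining Hall's condition once the cell-exclusion lists are imposed. In the original proof of Lemma~\ref{fillrows}, each pool $D_p(i)$ has both cardinality and maximum symbol-multiplicity equal to $k(n-m)-p$, which makes Hall's condition automatic; after excising up to $k-1$ symbols per pool the pools could in principle become badly degenerate. The natural remedy is to maintain, as an invariant throughout the iteration, a uniform bound strictly less than $k(n-m)-p$ on the multiplicity of any single symbol in any one pool, so that no handful of deletions destroys too many potential representatives. Propagating such an invariant---perhaps via a more careful selection rule within each Hall application, or by starting from a near-uniform pool distribution after Stage 1---and extracting from it an explicit (and almost certainly non-optimal) value of $n(m,k)$, will be the main technical hurdle.
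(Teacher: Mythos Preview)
The statement you are addressing is labelled a \emph{conjecture} in the paper; the authors offer no proof, so there is nothing to compare your argument against. What you have submitted is not a proof but a plan, and you yourself flag the decisive gap: once each availability pool $D_p(i)$ has the (up to $k-1$) symbols already present in the current cell excised, Hall's condition is no longer automatic, and you propose only that ``a more careful selection rule'' or ``a near-uniform pool distribution'' \emph{might} propagate a suitable invariant. Until that invariant is actually stated and verified, nothing has been established.

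The obstacle is genuine, not cosmetic. The paper itself notes that the simple analogue of Lemma~\ref{fillrows} is false in general---the $2\times 3$ $2$-latin rectangle with both rows equal to $\{1,2\},\{1,3\},\{2,3\}$ admits no simple completion---so any correct argument must exploit the largeness of $n$ in an essential way, and your sketch does not say how. In particular, the endgame of Stage~3 is where a row-by-row Hall approach is most fragile: at the final iteration $p=k(n-m)-1$ each pool $D_p(i)$ is a singleton, so there is no freedom left, and the forced symbol must happen not to coincide with any of the $k-1$ symbols already placed in cell $(n,i)$. Guaranteeing this requires foresight in the earlier iterations that a plain system-of-distinct-representatives argument does not supply. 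Filling this gap would amount to settling the conjecture; as written, your proposal does not.
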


\section{Erodability and Separability}\label{sec4}

In this section we explore circumstances under which $k$-latin squares
can be combined or broken down into larger or smaller configurations. 
 
Let $L_1$ be a $k_1$-latin square and $L_2$ be a $k_2$-latin square, each 
of order $n$, for some positive integers $k_1$ and $k_2$.
Then the {\em join} of $L_1$ and $L_2$, denoted by $L_1\oplus L_2$, 
is the $(k_1+k_2)$-latin square $L$ where
$L(i,j)=L_1(i,j)\cup L_2(i,j)$ for each 
$i,j\in N(n)$. 
Clearly the join is a commutative binary operation. 

A $k$-latin square $L$ is said to be {\em separable} 
if there exist $k_1,k_2<k$ such that $k_1+k_2=k$ and
$L$ is the join of a $k_1$-latin square and a $k_2$-latin square;
otherwise it is {\em non-separable}. 
A $k$-latin square is said to be {\em erodable} if it can be
 expressed as the join
of a $(k-1)$-latin square and a latin square; otherwise it is 
{\em non-erodable}. 
Note that if
a $k$-latin square is erodable, then it is separable.

However, the converse is not true.
For example, in Figure 1,   
the $4$-latin square is non-erodable, but can be written as the join of two $2$-latin squares:
\[
\begin{array}{l}
\begin{array}{|c|c|c|}
\hline
1,1,2,2 & 1,1,2,2 & 3,3,3,3 \\
\hline
1,1,3,3 & 2,2,3,3 & 1,1,2,2 \\
\hline
2,2,3,3 & 3,3,1,1, & 1,1,2,2 \\
\hline
\end{array}
\quad 
=
\quad 
\begin{array}{|c|c|c|}
\hline
1,2 & 1,2 & 3,3 \\
\hline
1,3 & 2,3 & 1,2 \\
\hline
2,3 & 3,1 & 1,2 \\
\hline
\end{array}
\quad 
\oplus
\quad
\begin{array}{|c|c|c|}
\hline
1,2 & 1,2 & 3,3 \\
\hline
1,3 & 2,3 & 1,2 \\
\hline
2,3 & 3,1 & 1,2 \\
\hline
\end{array}\\
\\
\centerline{\hbox{\rm Figure 1}}
\end{array}
\] 

In fact, it is clear that for any non-erodable $k$-latin square $L$ 
with $k\geq 2$, 
$L\oplus L$ is separable but non-erodable.

A $k$-latin square is said to be {\em fully separable} 
if it can be written as the join of $k$ latin squares. 

We next explore the relationship between separabaility, latin cubes and 
latin parallelepipeds. 
An $n\times n\times k$ {\em latin parallelepiped}
is a three dimensionsal array $A=[a_{i,j,\ell}]$ where
$1\leq i,j\leq n$, $1\leq \ell \leq k$, $a_{i,j,\ell}\in N(n)$ and $a_{i,j,\ell}\neq a_{i',j',\ell'}$ whenever exactly two of the following conditions hold: 
$i=i'$, $j=j'$, $\ell=\ell'$. 
If $k=n$ we say that $A$ is a {\em latin cube}. 

There is a clear equivalence between 
$n\times n\times k$ {\em latin parallelepipeds} 
and simple, fully separable 
$k$-latin squares of order $n$.  
It is shown in \cite{Koch} (with further examples in \cite{McW1}), 
that not every latin parallelepiped of order $n$ can be extended to a 
latin cube of order $n$. 

It is an intriguing open problem to determine, for each $n$,
 the smallest $k$ such that every $n\times n\times k$ latin parallelepiped 
may be extended to a latin cube of order $n$. 
The best-known constructions come from Kochol:
\begin{theorem}{\rm (\cite{Koch,Koch2})}
For each pair of integers 
$d$ and $n$ such that $d\geq 2$ and $n\geq 2d+1$, 
there exists an $n\times n\times (n-d)$ latin parallepiped 
that cannot be extended to a latin cube of order $n$.
\end{theorem}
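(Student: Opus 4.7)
The natural reformulation is the following. Given an $n\times n\times (n-d)$ latin parallelepiped $P=[a_{i,j,\ell}]$, define the complementary array
$M(i,j)=N(n)\setminus\{a_{i,j,1},\dots,a_{i,j,n-d}\}$.
Elementary counting shows $M$ is automatically a simple $d$-latin square of order $n$, and an extension of $P$ to a latin cube corresponds precisely to a decomposition of $M$ into $d$ latin squares, i.e., to $M$ being fully separable in the sense of Section~\ref{sec4}. Since $P$ itself (via its layers) is a fully separable simple $(n-d)$-latin square $\bar M$, the theorem becomes the following: for $d\geq 2$ and $n\geq 2d+1$, construct a pair $(M,\bar M)$ of simple $d$- and $(n-d)$-latin squares of order $n$ that partition $N(n)$ cell-by-cell, such that $\bar M$ is fully separable but $M$ is not.

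The plan is to build such a pair by a localised modification of a fully separable cyclic template. Begin with the diagonal shift template in which $\bar M(i,j)=\{(i+j\bmod n),\dots,(i+j+n-d-1\bmod n)\}$ and $M(i,j)$ is its complement; both are manifestly fully separable. Then pick a $(d+1)\times(d+1)$ block of cells and perform a swap of $d+1$ chosen symbols between $M$ and $\bar M$ inside the block, arranged so that within the block the $d$-sets $M(i,j)$ take a ``rigid'' configuration in which any attempted splitting of $M$ into $d$ latin squares is forced to place two occurrences of some symbol in a common row or column. The hypothesis $n\geq 2d+1$ is used here to guarantee that such a $(d+1)\times(d+1)$ block can be chosen with sufficient free rows, columns, and symbols outside it to absorb the perturbation without disturbing the bulk of the cyclic template.

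Verification then splits into three tasks: (i)~the modified $M$ and $\bar M$ remain simple multi-latin squares with correct row, column and symbol counts; (ii)~the modified $\bar M$ is still fully separable, using the slack $n-d\geq d+1$ to rearrange the affected layers; and (iii)~the modified $M$ admits no decomposition into $d$ latin squares. Steps (i) and (ii) are routine book-keeping in the style of the proof of Theorem~\ref{embed1} and Lemma~\ref{fillrows}: one tracks which cells were altered and exhibits an explicit compensating shuffle in the $(n-d)$-layers of $\bar M$.

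The main obstacle is (iii), which must rule out \emph{every} decomposition of $M$, not merely the obvious cyclic one, since a decomposition is free to reshuffle the block arbitrarily and must therefore be blocked by a global rigidity argument. The natural tool is a Hall-type obstruction: I would try to exhibit a set $S$ of symbols and a set of lines in $M$ whose total missing-positions are strictly fewer than the number needed to host them in distinct layers, mirroring the counting step in the proof of Lemma~\ref{fillrows} but run in the reverse direction. Making this obstruction robust against all reshufflings of the block is the technical core of the argument, and is where the interaction between $d$ and the ambient size $n$ must be tuned so that the threshold $n\geq 2d+1$ comes out naturally.
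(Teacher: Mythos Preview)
The paper does not prove this theorem at all: it is quoted from Kochol's papers \cite{Koch,Koch2} and used only to derive the subsequent corollary about $k$-latin squares that are not fully separable. So there is no ``paper's own proof'' to compare against; any proof you supply would have to stand on its own.

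As written, your proposal is a plan rather than a proof, and the gap is exactly where you locate it: step~(iii). Your reformulation via the complementary array $M$ is correct and is precisely how the paper extracts its corollary, and the idea of perturbing a cyclic template on a $(d+1)\times(d+1)$ block is reasonable heuristics. But you have not specified \emph{which} swap to perform, and you explicitly defer the non-separability argument to a hoped-for ``Hall-type obstruction''. That hope is misplaced in a concrete way. For a simple $d$-latin square $M$, the occurrences of any fixed symbol form a $d$-regular bipartite graph on rows and columns, and by K\"onig's theorem this \emph{always} decomposes into $d$ perfect matchings; likewise each row (or column) trivially satisfies any Hall condition you might write down. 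The obstruction to full separability is therefore not a bipartite deficiency at all: it is the genuinely three-dimensional constraint that the $n$ per-symbol decompositions be \emph{compatible}, i.e.\ assemble into common layers that are latin squares. No counting of ``missing positions versus available layers'' in the style of Lemma~\ref{fillrows} can detect this, because those counts are automatically balanced in any simple $d$-latin square. Kochol's actual constructions work instead by forcing, through a carefully placed rigid sub-configuration, that any attempted layering creates a repeated symbol in some line; reproducing that forcing is the entire content of the theorem, and your sketch does not yet contain it.
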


By collapsing each 
$n\times n\times (n-d)$ latin parallelepiped from the previous theorem
into a $(n-d)$-latin square of order $n$, then taking the complement (with respect to $N(n)$)
of each set in each cell, we obtain the following corollary.

\begin{corollary}
For all $k$ and $n$ such that $k\geq 2$ and $n\geq 2k+1$, there exists
a $k$-latin square of order $n$ which is {\em not} fully separable.
\end{corollary}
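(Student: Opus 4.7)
The plan is to invoke the preceding theorem with $d := k$, which under the corollary's hypotheses ($k \geq 2$ and $n \geq 2k+1$) yields an $n \times n \times (n-k)$ latin parallelepiped $A = [a_{i,j,\ell}]$ that cannot be extended to a latin cube of order $n$. First I would collapse $A$ to a simple $(n-k)$-latin square $M$ of order $n$ by setting $M(i,j) = \{a_{i,j,\ell} : 1 \leq \ell \leq n-k\}$. Simplicity follows from the latin-parallelepiped condition applied with $i=i'$, $j=j'$ and $\ell\neq\ell'$; the row and column counts of $n-k$ for each symbol follow from the other two conditions.

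Next, define the cellwise complement $L(i,j) := N(n) \setminus M(i,j)$. Since each $M(i,j)$ is an $(n-k)$-subset of $N(n)$, each $L(i,j)$ is a $k$-subset, and a short count (each symbol appears $n - (n-k) = k$ times in each row and each column) shows that $L$ is a simple $k$-latin square of order $n$. This $L$ is my candidate; I claim it is not fully separable. Suppose for contradiction that $L = L_1 \oplus L_2 \oplus \cdots \oplus L_k$ for latin squares $L_1, \dots, L_k$ of order $n$. Simplicity of $L$ then forces the values $L_1(i,j), \dots, L_k(i,j)$ to be pairwise distinct in every cell.

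To derive the contradiction I would extend $A$ to an $n \times n \times n$ array $A^+$ by appending $k$ new layers through $a^+_{i,j,(n-k)+r} := L_r(i,j)$ for $1 \leq r \leq k$, retaining the original entries of $A$ in the first $n-k$ layers. The three latin-parallelepiped conditions then need checking for $A^+$: along the $\ell$-axis above cell $(i,j)$ the $n$ entries are precisely $M(i,j) \cup L(i,j) = N(n)$ with multiplicity one, since $M$ and $L$ are simple and disjoint by construction; along the $i$- or $j$-axis with $\ell$ fixed, distinctness is inherited from $A$ for the original layers and from each $L_r$ being a latin square for the new layers. Thus $A^+$ is a latin cube extending $A$, contradicting Kochol's theorem and so establishing the corollary. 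The only step requiring care is the verification of these three conditions for $A^+$; this is routine once one notes that the complement construction makes $M(i,j)$ and $L(i,j)$ disjoint subsets whose union is all of $N(n)$, so no genuine obstacle arises beyond packaging the input theorem correctly.
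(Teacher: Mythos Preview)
Your proposal is correct and follows essentially the same approach as the paper: set $d=k$ in Kochol's theorem, collapse the resulting parallelepiped to a simple $(n-k)$-latin square, take cellwise complements to obtain the candidate $k$-latin square, and argue that a full separation would yield the forbidden extension to a latin cube. The paper states this only as a one-sentence sketch, and you have fleshed out the details accurately, including the verification that the complement is a simple $k$-latin square and that the appended layers produce a genuine latin cube.
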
  

The $k$-latin squares implied by Kochol's construction are, in general,   
separable and erodable. 
We will now work towards 
showing the existence of non-separable $k$-latin squares
of order $n$, 
where either $k=2$ and $n\geq 3$, or 
when $k\geq 3$ and $n\geq k+2$.

Observe that a $2$-latin square is erodable if and only if it is separable.

\begin{theorem}
There exists a non-separable 
$2$-latin square of order $n$ if and only if $n\geq 3$.
\end{theorem}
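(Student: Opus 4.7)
\emph{Plan.} The ``only if'' direction reduces to enumeration: for $n=1$ the unique $2$-latin square $[\{1,1\}]$ separates as the join of two copies of the trivial latin square, and for $n=2$ the row and column constraints yield essentially only two $2$-latin squares (the one with every cell equal to $\{1,2\}$, and the one with $\{1,1\}$ and $\{2,2\}$ placed as in the latin square $(1,2)/(2,1)$), both evidently separable.

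For the ``if'' direction, my plan is to combine an explicit small example with Theorem~\ref{embed1}. First I would exhibit an explicit $2$-latin square $N$ of order $3$ and verify its non-separability by finite case analysis. In any purported separation $L_1 \oplus L_2$, each cell $N(i,j)=\{a,b\}$ with $a\ne b$ forces one of two binary assignments $(L_1(i,j),L_2(i,j))\in\{(a,b),(b,a)\}$, while each cell $\{a,a\}$ fixes $L_1(i,j)=L_2(i,j)=a$. Propagating these binary choices through the row and column constraints of the two candidate latin squares then drives every branch to a contradiction.

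Second, I would establish the following lifting observation: if a $2$-latin square $L$ of order $n$ contains $N$ as a sub-array in the top-left $3\times 3$ block $B$, then $L$ itself is non-separable. Indeed, for any separation $L=L_1\oplus L_2$ the entries of $L_1|_B$ and $L_2|_B$ lie in $N(3)$ (because those of $N$ do), and restricting each row permutation of $L_i$ to three columns yields three distinct symbols of $N(3)$, hence a permutation of $N(3)$; the same argument applies to columns. Therefore $L_1|_B$ and $L_2|_B$ are latin squares of order $3$ separating $N$, contradicting the non-separability of $N$.

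Combining this observation with Theorem~\ref{embed1} dispatches all $n\ge 2\cdot 3=6$ in one stroke: embed $N$ in an $n\times n$ $2$-latin square. For the remaining cases $n=4$ and $n=5$ I would give explicit constructions and verify non-separability by the same style of case analysis; the $2$-latin square displayed at the start of the paper handles $n=4$. The main obstacle I expect is the $n=5$ construction, since the natural cyclic patterns of order $5$ tend to be separable, so one must carefully install a local non-separable obstruction without violating the row and column multiset counts.
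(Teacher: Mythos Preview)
Your argument is correct but takes a different route from the paper. The paper gives a single explicit family $L_n$ for every $n\ge 3$: columns $1$ and $2$ are filled with a ``shifted'' pattern $\{i,i+1\}$ in rows $1,\dots,n-2$ and a crossed pair $\{1,n\},\{n-1,n\}$ in rows $n-1,n$, while columns $3,\dots,n$ carry doubled symbols in the top block and repeated $\{j-2,j-1\}$ in the bottom two rows. Non-separability is then verified entirely within the first two columns by a short forcing argument, so no case split on $n$ and no appeal to Theorem~\ref{embed1} is needed.

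Your route---seed at $n=3$, lift via a sub-block lemma, and invoke Theorem~\ref{embed1}---is sound. The lifting observation is correct: if $L=L_1\oplus L_2$ and the top-left $3\times 3$ block equals $N$ with symbols in $N(3)$, then each $L_i$ restricted to that block has three distinct entries from $N(3)$ in every row and column, hence is a latin square of order $3$, separating $N$. (This is exactly the mechanism the paper itself relies on, implicitly, when it combines the $U_k$ and $U_{k,s}$ constructions with Theorem~\ref{embed1} for $k\ge 3$.) What your approach buys is modularity and a reusable lemma; what it costs is the need for bespoke examples at $n=4,5$. Your concern about $n=5$ is unwarranted: the paper's $L_5$ is not the pure cyclic $2$-latin square but has the twist in the last two rows of columns $1$ and $2$, and the same two-column forcing argument shows it is non-separable. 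So there is no genuine obstacle at $n=5$; you simply haven't yet written down the right example.
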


\begin{proof}
By inspection, each $2$-latin square of order $1$ or $2$ is separable. 
For each $n\geq 3$, define a $2$-latin square $L_n$ of order $n$ as follows.
$$
\begin{array}{l}
L_n(i,1)=L_n(i,2)=\{i,i+1\}, 1\leq i\leq n-2, \\
L_n(i,j)=\{(i+j-1 \mod n,i+j-1 \mod n)\}, 1\leq i\leq n-2,3\leq j\leq n, \\
L_n(n-1,1)=L_n(n,2)=\{1,n\},  \\
L_n(n,1)=L_n(n-1,2)=\{n-1,n\},  \\
L_n(n-1,j)=L_n(n,j)=\{j-2,j-1\}, 3\leq j\leq n.
\end{array}
$$
(The $2$-latin square $L_3$ is in Figure 1 and $L_4$ is in the Introduction.)
It is not hard to check that $L_n$ is non-separable for each $n\geq 2$. 
It is sufficient to consider the first two columns of $L_n$. 
\end{proof}

Next suppose that $k\geq 3$.
We will first construct a non-separable $k$-latin square of every order from $k+2$ to $2k+1$. 
For ease of understanding we first give the construction for order $k+2$.
Let $(K,\circ)$ be an idempotent quasigroup of order $k$
(where, without loss of generality, $K=N(k)$), 
and for $a,b\in K$ define 
$a\odot b=(a+b$ modulo $k)$.
For ease of notation, we use the abbreviation $x^y$ when $x$ occurs $y$ times in a multiset.
Define $U_k$ to be a $k$-latin square of order $k+2$ with

$$
\begin{array}{l}
U_k(i,i) = \{i,(k+1)^{k-1}\}, \;\; i\in K, \\
U_k( i\odot 1 ,i) = \{k+1, ((i\odot 1 )\circ i)^{k-1}\}, \;\; i\in K, \\
U_k(i,j) = \{k+2, (i\circ j)^{k-1}\}, \;\; i,j\in K, \; i\ne j, j\odot  1, \\
U_k(i,k+1) = U_k(k+1,i) = \{i^{k-1},k+2\}, \;\; i\in K, \\
U_k(i,k+2) = U_k(k+2,i) = \{k+2\}\cup K \setminus \{i\}, \;\; i\in K, \\
U_k(k+1,k+1) = U_k(k+2,k+2) = K, \\
U_k(k+1,k+2) = U_k(k+2,k+1) = (k+1)^k.
\end{array}
$$

We illustrate with an example for the case $k=4$. We use the following idempotent quasigroup
of order $k=4$:

\[
\begin{array}{|c|cccc|}
\hline
\circ & 1 & 2 & 3 & 4 \\
\hline
1 & 1 & 4 & 2 & 3 \\
2 & 3 & 2 & 4 & 1 \\
3 & 4 & 1 & 3 & 2 \\
4 & 2 & 3 & 1 & 4 \\
\hline
\end{array}
\]
We obtain 
\[
U_4=
\begin{array}{|c|c|c|c|c|c|}
\hline
5,5,5,1 & 4,4,4,6 & 2,2,2,6 & 3,3,3,5 & 1,1,1,6 & 2,3,4,6\\
\hline
3,3,3,5 & 5,5,5,2 & 4,4,4,6 & 1,1,1,6 & 2,2,2,6 & 1,3,4,6\\
\hline
4,4,4,6 & 1,1,1,5 & 5,5,5,3 & 2,2,2,6 & 3,3,3,6 & 1,2,4,6\\
\hline
2,2,2,6 & 3,3,3,6 & 1,1,1,5 & 5,5,5,4 & 4,4,4,6 & 1,2,3,6\\
\hline
1,1,1,6 & 2,2,2,6 & 3,3,3,6 & 4,4,4,6 & 1,2,3,4 & 5,5,5,5 \\
\hline
2,3,4,6 & 1,3,4,6 & 1,2,4,6 & 1,2,3,6 & 5,5,5,5 & 1,2,3,4 \\
\hline
\end{array}
\]

\begin{lemma}
For any $k\ge 3$, and for any idempotent quasigroup $(K,\circ )$ of order $k$, $U_k$ is a
non-separable $k$-latin square.
\end{lemma}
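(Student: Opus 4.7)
The plan is to exploit the very rigid support of the symbol $k+1$ in $U_k$: by construction it appears only in the diagonal cells $(i,i)$ for $i\in K$ with multiplicity $k-1$, in the subdiagonal cells $(i\odot 1, i)$ for $i\in K$ with multiplicity $1$, and in the two cells $(k+1, k+2)$ and $(k+2, k+1)$ with full multiplicity $k$. The verification that $U_k$ is genuinely a $k$-latin square is a routine symbol-multiplicity count in each row and column and I would dispatch that first, separately from the separability argument.

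Suppose for contradiction that $U_k = L_1 \oplus L_2$ with $L_t$ a $k_t$-latin square and $1\leq k_1,k_2\leq k-1$. Let $a_i$ denote the multiplicity of $k+1$ in $L_1(i,i)$ and $b_i$ its multiplicity in $L_1(i\odot 1, i)$, for $i\in K$. Counting the symbol $k+1$ in row $r\in K$ and in column $c\in K$ of $L_1$ yields the coupled equations
\[
a_r + b_{r-1} = k_1, \qquad a_c + b_c = k_1,
\]
so $b_{r-1}=b_r$ for every $r\in K$, and cyclicity forces $b_i \equiv b$ for some constant $b\in\{0,1\}$. This rigidity step is the crux. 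Correspondingly, either $L_1(i,i) = \{(k+1)^{k_1}\}$ for all $i$ (when $b=0$), or $L_1(i,i) = \{i,(k+1)^{k_1-1}\}$ for all $i$ (when $b=1$).

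The final step is to extract a contradiction from the behavior of the symbol $i$ in row $i$ of $L_1$. Because $(K,\circ)$ is an idempotent quasigroup, the equation $i\circ j = i$ forces $j = i$, and a quick case check through the row-$i$ cells of $U_k$ shows that the symbol $i$ can appear in row $i$ only in the two cells $(i,i)$ and $(i,k+1) = \{i^{k-1}, k+2\}$. In the $b=0$ case this immediately pins down $L_1(i,k+1) = \{i^{k_1}\}$, pushing the unique copy of $k+2$ at $(i,k+1)$ into $L_2$; in the $b=1$ case it pins down $L_1(i,k+1) = \{i^{k_1-1}, k+2\}$, pushing that copy into $L_1$. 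Since the $k$ cells $(i,k+1)$, $i\in K$, are exactly the cells of column $k+1$ containing the symbol $k+2$, summing over $i$ shows that $L_1$'s column $k+1$ contains either $0$ copies or $k$ copies of $k+2$, and neither equals $k_1$, giving the desired contradiction. The main obstacle is spotting the row--column coupling that forces the $b_i$ to be constant; once that rigidity is in hand, the rest is straightforward symbol-chasing using the idempotency of $\circ$.
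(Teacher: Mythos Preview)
Your argument is correct, but it follows a genuinely different route from the paper's. The paper's proof starts at the cell $(k+1,k+1)$: any sub-$l$-latin-square $S$ must place some $x\in K$ there, and then a short symbol chase (symbol $x$ in row $k+1$, then in column $x$, then symbol $k+1$ in column $x$, then in row $x\odot 1$) forces $x\odot 1\in S(k+1,k+1)$ as well; iterating gives $K\subseteq S(k+1,k+1)$ and hence $l=k$. Your proof instead works globally on the $K\times K$ block: the row/column counts for symbol $k+1$ yield the cyclic coupling $b_{r-1}=b_r$, pinning the subdiagonal contributions to a constant $b\in\{0,1\}$, after which idempotency of $\circ$ determines $L_1(i,k+1)$ exactly and a count of symbol $k+2$ in column $k+1$ gives $0$ or $k$ copies rather than $k_1$. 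The paper's chase is shorter and zeroes in immediately on the cell $(k+1,k+1)$ that witnesses non-separability; your approach is more systematic, isolates cleanly the cyclic rigidity of $\odot$ as the key mechanism, and makes explicit where idempotency is actually used. Either argument transfers essentially unchanged to the $U_{k,s}$ variant, since the relevant cells in rows and columns indexed by $K\cup\{k+1\}$ are identical.
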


\begin{proof}
We leave the proof that $U_k$ is a $k$-latin square as an exercise.

To prove that $U_k$ is non-separable, suppose that $U_k$ contains an $l$-latin
square $S$ of order $k+2$, $l\ge 1$.
Then we have $x\in S(k+1,k+1)$, for some $x\in K$.

The remaining $k-1$ copies of the symbol $x$ in row $k+1$ of $U_k$ occur in cell $(k+1,x)$,
and exactly $l-1$ of these copies must occur in $S$, so $S(k+1,x)=\{x^{l-1},k+2\}$.
To obtain $l$ copies of symbol $x$ in column $x$, we must have $S(x,x)=\{x,(k+1)^{l-1}\}$.
The only other occurrence of symbol $k+1$ in column $x$ of $U_k$ is in cell 
$(x\odot  1,x)$, so we have $k+1 \in S(x\odot  1, x) $. 
The remaining $k-1$ copies of symbol $k+1$ in row $x\odot  1$ of $U_k$ occur in cell 
$(x\odot  1,x\odot  1)$, so $S(x\odot  1,x\odot  1) = \{(k+1)^{l-1},x\odot 1\}$.
In the same way we have $S(k+1,x\odot  1)=\{(x \odot 1)^{l-1},k+2 \}$, and finally
$x\odot  1 \in S(k+1,k+1)$.

It follows inductively that $K \subseteq S(k+1,k+1)$, and hence that $l=k$.
\end{proof}
\vspace{3mm}

We now modify the construction for $U_k$ to obtain non-separable $k$-latin squares
of every order between $k+3$ and $2k+1$. Let $s\in \{1,2,\dots, k-1\}$ and define
$K'=N(s+2)$.
We use a second idempotent quasigroup $(K', \star )$ of order $s+2$.
Define $U_{k,s}$ to be a $k$-latin square of order $k+s+2$ with
$$
\begin{array}{l}
U_{k,s}(i,i) = \{i,(k+1)^{k-1}\}, \;\; i\in K, \\
U_{k,s}( i\odot 1 ,i) 
	= \{k+1, ((i\odot 1 )\circ i)^{k-s-1} ,k+3,k+4,\dots ,k+s+2\}, \;\; i\in K, \\
U_{k,s}(i,j) 
= \{k+2, (i\circ j)^{k-s-1},k+3,k+4,\dots ,k+s+2\}, \;\; i,j\in K, \; i\ne j, j\odot  1, \\
U_{k,s}(i,k+1) = U_{k,s}(k+1,i) = \{i^{k-1},k+2\}, \;\; i\in K, \\
U_{k,s}(i,k+x) = U_{k,s}(k+x,i) = \{k+x\}\cup K \setminus \{i\}, \;\; i\in K, \; 2\le x\le s+2, \\
U_{k,s}(k+x,k+y) = K, \;\; x,y \in K', \; x\star y = 1, \\ 
U_{k,s}(k+x,k+y) = (k+1)^k, \;\; x,y \in K', \; x\star y = 2, \\ 
U_{k,s}(k+x,k+x) = (k+2)^k \;\; x \in K', \; x\ge 3, \\
U_{k,s}(k+x,k+y) = (k+z)^k, \;\; x,y,z \in K', \; x\star y = z \ge3, \; x\ne y.\\ 
\end{array}
$$

We illustrate with an example where $k=4$ and $s=2$. We use the same idempotent 
quasigroup $(K,\circ)$ as above. 
Since $s+2=k$ we let $(K',\star )$ be the same quasigroup also.

We obtain
$U_{4,2}=$

\[
\begin{array}{|c|c|c|c||c|c|c|c|}
\hline
5,5,5,1 & 4,7,8,6 & 2,7,8,6 & 3,7,8,5 & 1,1,1,6 & 2,3,4,6 & 2,3,4,7 & 2,3,4,8  \\
\hline
3,7,8,5 & 5,5,5,2 & 4,7,8,6 & 1,7,8,6 & 2,2,2,6 & 1,3,4,6 & 1,3,4,7 & 1,3,4,8 \\
\hline
4,7,8,6 & 1,7,8,5 & 5,5,5,3 & 2,7,8,6 & 3,3,3,6 & 1,2,4,6 & 1,2,4,7 & 1,2,4,8 \\
\hline
2,7,8,6 & 3,7,8,6 & 1,7,8,5 & 5,5,5,4 & 4,4,4,6 & 1,2,3,6 & 1,2,3,7 & 1,2,3,8 \\
\hline \hline
1,1,1,6 & 2,2,2,6 & 3,3,3,6 & 4,4,4,6 & 1,2,3,4 & 8,8,8,8 & 5,5,5,5 & 7,7,7,7 \\
\hline
2,3,4,6 & 1,3,4,6 & 1,2,4,6 & 1,2,3,6 & 7,7,7,7 & 5,5,5,5 & 8,8,8,8 & 1,2,3,4 \\
\hline
2,3,4,7 & 1,3,4,7 & 1,2,4,7 & 1,2,3,7 & 8,8,8,8 & 1,2,3,4 & 6,6,6,6 & 5,5,5,5 \\
\hline
2,3,4,8 & 1,3,4,8 & 1,2,4,8 & 1,2,3,8 & 5,5,5,5 & 7,7,7,7 & 1,2,3,4 & 6,6,6,6 \\
\hline
\end{array}
\]

Again we leave the proof that $U_{k,s}$ is a $k$-latin square as an excercise.
The proof that $U_{k,s}$ is non-separable is exactly the same as the equivalent proof 
for $U_k$, because for any $x\in K$ the occurrences of the symbols $x$ and $k+1$ in row
$k+1$ and column $x$ are identical in $U_k$ and $U_{k,s}$. Thus we have the following:

\begin{corollary}
For any $k\ge 3$ and $s\in \{1,2,\dots , k-1\}$, and for any idempotent 
quasigroups $(K,\circ )$ of order $k$ and $(K',\star )$ of order $s+2$, $U_{k,s}$ is a
non-separable $k$-latin square of order $k+s+2$.
\end{corollary}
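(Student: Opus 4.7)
The plan is to split the proof into (i) verifying that $U_{k,s}$ is a $k$-latin square of order $k+s+2$, and (ii) adapting the non-separability chain from the preceding lemma. Part (i) is the bulk of the work, since the paper leaves it as an exercise; part (ii) reduces to the observation that $U_{k,s}$ and $U_k$ agree on exactly the cells that drive the inductive chain.

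For (i), I would first verify cell cardinalities by inspecting each defining formula (for instance, an off-diagonal top-block cell of the form $U_{k,s}(i,j)$ with $i \ne j, j \odot 1$ contains $1 + (k - s - 1) + s = k$ entries). Then I would check row and column counts by splitting the rows into two types. For rows indexed by $i \in K$: the symbol $i$ contributes once from $(i,i)$ and $k-1$ times from $(i, k+1)$; each $j \in K \setminus \{i\}$ contributes $k-s-1$ times in the unique top-block cell $(i, j')$ with $i \circ j' = j$ (which exists by the quasigroup property of $\circ$) plus once in each of the $s+1$ cross cells $(i, k+x)$ for $x \ge 2$; the symbols $k+1$, $k+2$, and $k+3, \ldots, k+s+2$ are counted similarly from the handful of cell types containing them. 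For rows indexed by $k+x$ with $x \in K'$: the idempotence and row-permutation properties of $\star$ guarantee that each bulk symbol $k+1, k+2, \ldots, k+s+2$ arises in quantity $k$ from exactly one bottom-right cell (with the diagonal convention for $x \ge 3$ accounting for $k+2$), while each $j \in K$ appears $k-1$ times across the cross cells $(k+x, i)$ with $i \ne j$ and once in the unique bottom-right cell where $x \star y = 1$. Columns follow by symmetry of the construction.

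For (ii), I would follow the proof of the preceding lemma essentially verbatim. The key check is that the cells inspected in that proof contain the same multiset of symbols in $U_{k,s}$ as in $U_k$ when restricted to the symbols $x \in K$ and $k+1$. Indeed, $U_{k,s}(k+1,k+1) = K$ (since $1 \star 1 = 1$), $U_{k,s}(k+1,i) = \{i^{k-1}, k+2\}$, $U_{k,s}(i,i) = \{i, (k+1)^{k-1}\}$, and $U_{k,s}(i \odot 1, i)$ contains $k+1$ exactly once and contains no copy of $i$ (the latter because idempotence of $\circ$ together with uniqueness of solutions in column $i$ forces $a \circ i = i \Rightarrow a = i$, so $(i \odot 1) \circ i \ne i$). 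Moreover, no other cell in row $k+1$ or column $i$ contains any element of $\{i, k+1\}$. Hence starting from $x \in S(k+1, k+1)$, the deduction chain $S(k+1, x) = \{x^{l-1}, k+2\}$, then $S(x,x) = \{x, (k+1)^{l-1}\}$, then $k+1 \in S(x \odot 1, x)$, then $S(x \odot 1, x \odot 1) = \{x \odot 1, (k+1)^{l-1}\}$, then $S(k+1, x \odot 1) = \{(x \odot 1)^{l-1}, k+2\}$, and finally $x \odot 1 \in S(k+1, k+1)$ carries over unchanged. Iterating the shift $\odot 1$ over $K$ forces $K \subseteq S(k+1, k+1)$, whence $l = k$, contradicting $l < k$.

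The main obstacle is the bookkeeping in (i): four cell regions (top-left $K \times K$, the two cross strips, and the bottom-right block, itself split by the value of $x \star y$) interact with symbols from several classes, so the case analysis is long. Each individual count, however, is an immediate consequence of idempotence and the quasigroup axioms, so the check is mechanical. The non-separability argument introduces no new ideas beyond those already used for $U_k$.
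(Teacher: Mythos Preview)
Your proposal is correct and follows essentially the same approach as the paper: the paper likewise leaves the verification that $U_{k,s}$ is a $k$-latin square as an exercise, and justifies non-separability by observing that the occurrences of the symbols $x$ and $k+1$ in row $k+1$ and column $x$ are identical in $U_k$ and $U_{k,s}$, so the inductive chain from the preceding lemma carries over verbatim. Your sketch of part (i) is more explicit than the paper's treatment, and your part (ii) tracks exactly the same chain of forced cell contents.
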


Using the above corollary and Theorem \ref{embed1} from 
the previous section, we obtain the following theorem:

\begin{theorem}
For any integers $k\ge 3$ and $n\ge k+2$, there exists a non-separable $k$-latin square 
of order $n$.
\end{theorem}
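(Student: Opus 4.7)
The strategy is to bootstrap the preceding corollary --- which supplies a non-separable $k$-latin square of every order $m \in \{k+2, k+3, \ldots, 2k+1\}$ (taking $U_k$ when $m = k+2$ and $U_{k,s}$ with $s = m-k-2$ otherwise) --- up to all larger orders via Theorem~\ref{embed1}. The crucial bridge is a preservation lemma I would prove first: \emph{if a non-separable $k$-latin square $L$ of order $m$ embeds in a $k$-latin square $L'$ of order $n$, then $L'$ is itself non-separable.}

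To prove this lemma, I would argue that the embedding condition $L(i,j) \subseteq L'(i,j)$ together with $|L(i,j)| = |L'(i,j)| = k$ forces $L'(i,j) = L(i,j)$ for all $i,j \in N(m)$, so the top-left $m \times m$ block of $L'$ coincides with $L$ and contains only $N(m)$-symbols. Consequently, for each $e \in N(m)$ and each row $i \le m$, all $k$ occurrences of $e$ in row $i$ of $L'$ must lie in the first $m$ columns, and symmetrically for columns. Now suppose $L' = L'_1 \oplus L'_2$ is a separation. Restricting each factor to the top-left $m \times m$ block yields cells containing only $N(m)$-symbols, and the confinement just noted guarantees that each $e \in N(m)$ appears exactly $k_1$ times in every row and column of $L'_1|_{m \times m}$, and $k_2$ times in $L'_2|_{m \times m}$. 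Hence $L'_1|_{m \times m}$ and $L'_2|_{m \times m}$ are $k_1$- and $k_2$-latin squares of order $m$ whose join is $L$, contradicting the non-separability of $L$.

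With this lemma in hand, the theorem is assembled by cases: for $n \in [k+2, 2k+1]$ the corollary supplies a non-separable $k$-latin square of order $n$ directly; for $n \ge 2(k+2) = 2k+4$ one embeds $U_k$ into a $k$-latin square of order $n$ via Theorem~\ref{embed1} and invokes the preservation lemma. The step I expect to be most delicate is the row/column confinement inside the preservation lemma, since it hinges on carefully ruling out any occurrence of $N(m)$-symbols outside the top-left block of $L'$ --- the hypothesis $|L(i,j)| = k = |L'(i,j)|$ is what makes this bookkeeping work. A secondary technicality is the pair of orders $n \in \{2k+2, 2k+3\}$, which lie just below the embedding threshold $2(k+2)$ when seeding with $U_k$; these two residual cases would need either a small dedicated construction or a modification of the $U_{k,s}$ template that admits slightly larger indices.
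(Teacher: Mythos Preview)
Your approach matches the paper's exactly; the paper's entire argument is the one sentence ``Using the above corollary and Theorem~\ref{embed1} from the previous section, we obtain the following theorem.'' Your preservation lemma is correct and supplies the step the paper leaves implicit: once the full $k$-latin square $L$ occupies the top-left $m\times m$ block of $L'$, each symbol of $N(m)$ already meets its quota of $k$ in the first $m$ entries of every relevant row and column, so any separation of $L'$ restricts to a separation of $L$.

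You are also right to flag the residual orders $n\in\{2k+2,\,2k+3\}$. The explicit constructions $U_k,\,U_{k,1},\dots,U_{k,k-1}$ cover only orders $k+2$ through $2k+1$, while Theorem~\ref{embed1} applied to the smallest seed (order $k+2$) requires $n\ge 2(k+2)=2k+4$. The paper's one-line proof has this same gap and does not address it; so your proposal is exactly as complete as the paper's own argument. Your suggested patches are plausible --- extending $U_{k,s}$ to $s\in\{k,k+1\}$ would need a genuine reworking of the off-diagonal cells since the exponent $k-s-1$ becomes negative --- but neither you nor the paper actually carries one out.
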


Even for small values of $k$ and $n$ the above result is not the best
possible.  
For order $4$, we exhibit non-separable $k$-latin
squares for each $k\in \{3,4,5,6\}$.  These examples were found
by computer and checked by hand.

\[\begin{array}{|c|c|c|c|}
\hline
4,3,3 & 2,2,2 & 4,1,3 &  4,1,1 \\
\hline
1,1,3 & 4,1,3 & 2,2,2 &  4,4,3 \\
\hline
2,2,2 & 4,1,1 & 4,4,3 & 1,3,3 \\
\hline
4,4,1 & 4,3,3 & 1,1,3 & 2,2,2 \\
\hline
\end{array}
\quad
\begin{array}{|c|c|c|c|}
\hline
4,2,3,3 & 4,2,2,2 & 1,1,3,3 &  4,4,1,1 \\
\hline
4,1,2,3 & 1,1,2,3 & 4,2,3,3 &  4,4,1,2 \\
\hline
1,1,1,2 & 4,4,4,3 & 4,2,2,2 & 1,3,3,3 \\
\hline
4,4,2,3 & 1,1,3,3 & 4,4,1,1 & 2,2,2,3 \\
\hline
\end{array}\]
\[\begin{array}{|c|c|c|c|}
\hline
4,2,2,3,3 & 4,1,1,1,1 & 1,2,2,2,3 &  4,4,4,3,3 \\
\hline
1,2,3,3,3 & 4,4,2,2,2 & 4,4,4,3,3 &  1,1,1,2,2 \\
\hline
4,4,4,2,2 & 1,2,2,3,3 & 4,4,1,1,1 & 1,2,3,3,3 \\
\hline
4,1,1,1,1 & 4,4,3,3,3 & 1,2,2,3,3 & 4,4,2,2,2 \\
\hline
\end{array}\]
\[\begin{array}{|c|c|c|c|}
\hline
4,3,3,3,3,3 & 2,2,2,2,2,3 & 1,1,1,1,1,1 &  4,4,4,4,4,2 \\
\hline
1,1,2,2,2,3 & 4,1,1,1,1,2 & 4,4,4,4,4,3 &  2,2,3,3,3,3 \\
\hline
1,1,1,2,2,2 & 4,4,4,4,4,3 & 2,2,2,3,3,3 & 4,1,1,1,3,3 \\
\hline
4,4,4,4,4,1 & 1,1,3,3,3,3 & 4,2,2,2,3,3 & 1,1,1,2,2,2 \\
\hline
\end{array}\]

We were unable to find a non-separable $7$-latin square of order $4$. 
It seems plausible that such a configuration does not exist; however
we were unable to check every possible case by computer. 

We show the following.
\begin{theorem}
For each positive integer $n$, 
there exists some finite value $g(n)$ such that for any $k\geq g(n)$, 
every $k$-latin square of order $n$ is separable.
\label{conj2}
\end{theorem}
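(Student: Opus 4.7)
The plan is to interpret the set of all multi-latin squares of order $n$, across all indices $k$, as the integer points of a single pointed rational polyhedral cone, and then to invoke the finiteness of its Hilbert basis.

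First, I would encode a multi-latin square $L$ of order $n$ as the vector $(L_{ijs})_{i,j,s\in N(n)} \in \mathbb{Z}_{\geq 0}^{n^3}$, where $L_{ijs}$ is the multiplicity of symbol $s$ in cell $(i,j)$. Letting $k$ be an unknown common value, the axioms of an index-$k$ multi-latin square amount to the homogeneous linear conditions that all $n^2$ cell sums $\sum_s L_{ijs}$, all $n^2$ row sums $\sum_j L_{ijs}$, and all $n^2$ column sums $\sum_i L_{ijs}$ coincide. Together with the inequalities $L_{ijs}\geq 0$, these conditions cut out a cone $C_n \subseteq \mathbb{R}_{\geq 0}^{n^3}$ defined by integer equations and inequalities, so $C_n$ is a rational polyhedral cone; since $C_n$ lies in the non-negative orthant, it is pointed. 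The integer points of $C_n$ are exactly the zero vector together with the multi-latin squares of order $n$ of all positive indices.

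Under this encoding, the join operation $\oplus$ of Section~\ref{sec4} corresponds to ordinary vector addition, so a $k$-latin square $L$ is separable precisely when $L = L' + L''$ for some $L',L'' \in (C_n \cap \mathbb{Z}^{n^3})\setminus\{0\}$. In other words, separability coincides with reducibility in the commutative monoid $C_n \cap \mathbb{Z}^{n^3}$. By Gordan's lemma, the standard finiteness theorem for Hilbert bases of pointed rational polyhedral cones, this monoid admits a unique minimal generating set $H_n$, which is finite and consists precisely of its irreducible elements, that is, the non-separable multi-latin squares of order $n$.

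Because $H_n$ is finite, only finitely many indices $k$ admit a non-separable $k$-latin square of order $n$, so one may take
\[ g(n) \;=\; 1 + \max\{\, k : \text{some } h \in H_n \text{ has index } k\,\}, \]
and by construction every $k$-latin square of order $n$ with $k\geq g(n)$ is separable. The routine checks in the write-up are that $C_n$ is pointed and rational and that $\oplus$ matches vector addition, both of which are immediate. The real content I am sidestepping is quantitative: this argument is purely a finiteness statement and yields no explicit upper bound on $g(n)$, which is presumably the main obstacle to any sharpened version of the theorem.
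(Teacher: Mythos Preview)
Your proof is correct but the route is genuinely different from the paper's.  The paper argues by contradiction using a Dickson-type subsequence extraction: assuming an infinite sequence $(L_t)$ of non-separable multi-latin squares with strictly increasing indices, one passes, coordinate by coordinate over all $(i,j,s)\in N(n)^3$, to an infinite subsequence $(R_r)$ along which every multiplicity $m_t(i,j,s)$ is non-decreasing; then for $r_1<r_2$ the difference $R_{r_2}-R_{r_1}$ is itself a multi-latin square of positive index, so $R_{r_2}=R_{r_1}\oplus(R_{r_2}-R_{r_1})$ is separable, a contradiction.  This is entirely elementary, using nothing beyond the fact that every sequence of non-negative integers has a non-decreasing subsequence.

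Your argument instead packages the same finiteness phenomenon as Gordan's lemma: the integer points of the pointed rational cone $C_n$ form a positive affine monoid, whose unique minimal generating set---the Hilbert basis---is finite and coincides with the set of irreducible (i.e., non-separable) elements.  The two arguments are cousins (indeed Gordan's lemma can be proved via Dickson's lemma), but yours buys something extra: it identifies the non-separable $k$-latin squares of order $n$ as precisely the Hilbert basis of an explicit cone, which is in principle algorithmically computable and would determine $g(n)$ exactly for small $n$.  The paper's argument, by contrast, is self-contained and avoids any appeal to polyhedral machinery.  Neither approach yields an explicit bound on $g(n)$, as you note.
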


\begin{proof}
Suppose that $g(n)$ does not exist for some fixed $n$. Then there exists an
infinite sequence $(L_t)$ of non-separable $k(t)$-latin squares of order $n$, 
where $k(t)$ is strictly increasing. 
For each $i,j,s\in N(n)$, let $m_t(i,j,s)$ be the number of copies of $s$ in 
cell $L_t(i,j)$. 
Since our sequence has infinite length, for fixed $(i,j,s)\in N(n)\times N(n)\times N(n)$, the sequence $(m_t(i,j,s))$ contains a non-decreasing sub-sequence of infinite length. 
Next, replace $(L_t)$ with one of its infinite subsequences so that $(m_t(i,j,s))$ is non-decreasing.  
We repeat this process for each $(i,j,s)\in N(n)\times N(n)\times N(n)$, obtaining an infinite subsequence $(R_r)$. 
Since $k(t)$ is strictly increasing, 
all but possibly the first multi-latin square in $(R_r)$ is separable. This is  a contradiction, so our theorem is true.
\end{proof}

It is an open problem to determine $g(n)$ exactly for each $n\geq 1$. 
It is an easy exercise to show that $g(1)=g(2)=2$. 
We conjecture that $g(3)=3$ (the data in Table 1 in the next section certainly supports this)  
and that $g(4)=7$. 

\section{Computation}\label{sec5}

We remind the reader of the definition of {\em main class} and {\em paratopy} given in Section \ref{sec2}.

We wrote C++ code~\cite{klatincode} to enumerate $k$-latin squares using
the method of
{\em canonical augmentation} \cite{classificationAlgorithms},
\cite{McKayCanonical}. The main requirement is a function
$\mathcal{C}(K)$ that gives a {\em canonical label} of a (partial)
$k$-latin square. We require that 
$\mathcal{C}(K) = \mathcal{C}(K')$ if and only if $K$ and $K'$ are
paratopic. For this we generalise a well-known graph representation of a
latin square \cite{McMeMy}.
For a $k$-latin square $K$ of order $n$
we form a graph $G_K$ with vertex set
\begin{equation}\label{eqnVertices}
\{ v_1, \dots, v_{kn^2} \}
\cup
\{ r_1, \dots, r_{n},
c_1, \dots, c_{n},
s_1, \dots, s_{n} \}
\cup \{ R, C, S \}.
\end{equation}
Without loss of generality we may order the elements that appear in each
cell of $K$, so we may speak of the $y$th element in a cell, for 
$1 \leq y \leq k$. If $e$ is the $y$th symbol in cell $(i,j)$ of $K$, then
$G_K$ has the edges
$(v_{\ell}, r_i)$, 
$(v_{\ell}, c_j)$, and
$(v_{\ell}, s_e)$
where $\ell = e(n(i-1) + j-1) + y$.
Further, $G_K$ has the edges
$(r_i,R)$, $(c_i,C)$, $(s_i,S)$
for $1 \leq i \leq n$.
Further, we colour the vertices of 
$G_K$ according to the partitioning in (\ref{eqnVertices}).
The \texttt{nauty}~\cite{nauty} package provides a {\em canonical label} 
$\mathcal{C}(G_K)$ such that
$\mathcal{C}(G_K) = \mathcal{C}(G_{K'})$ if and only if
$G_K^{\gamma} = G_{K'}$ for some colour-preserving permutation $\gamma$
of the vertex labels of $G_K$. 
Since $\gamma$ preserves adjacencies and
colour classes, the following lemma easily follows.

\begin{lemma}
Let $K$ and $K'$ be two $k$-latin squares of order $n$. Then 
$\mathcal{C}(G_K) = \mathcal{C}(G_{K'})$ if and only if
$K$ and $K'$ are paratopic.
\end{lemma}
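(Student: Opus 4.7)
The plan is to prove the biconditional directly from the construction of $G_K$; the lemma then follows because \texttt{nauty}'s canonical label satisfies $\mathcal{C}(G_K)=\mathcal{C}(G_{K'})$ precisely when there is a colour-preserving isomorphism $\gamma:G_K\to G_{K'}$. The key structural observation is that under the three-part colouring $V_1=\{v_\ell\}$, $V_2=\{r_i\}\cup\{c_j\}\cup\{s_e\}$ and $V_3=\{R,C,S\}$, a colour-preserving isomorphism may permute $\{R,C,S\}$ arbitrarily, but because $r_i$ is the only vertex type in $V_2$ adjacent to $R$ (and similarly $c_j$ to $C$ and $s_e$ to $S$), the induced permutation $\pi\in S_3$ on $\{R,C,S\}$ forces the three blocks of $V_2$ to be permuted among themselves accordingly. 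This built-in freedom is exactly what makes paratopy, rather than mere isotopy, appear.

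For the direction ``paratopic $\Longrightarrow$ equal canonical label'', suppose a paratopism consisting of a row permutation $\alpha$, a column permutation $\beta$, a symbol permutation $\sigma$ and a conjugation $\pi\in S_3$ sends $K$ to $K'$. I would construct $\gamma$ on the vertices of $G_K$ by applying $\alpha,\beta,\sigma$ to the row/column/symbol labels and $\pi$ to $\{R,C,S\}$ (swapping the three blocks of $V_2$ accordingly), and for each entry vertex $v_\ell\in V_1$ with neighbourhood $\{r_i,c_j,s_e\}$ choosing an entry vertex of $G_{K'}$ whose neighbourhood is the image of $\{r_i,c_j,s_e\}$. Since the multiplicity of the triple $(i,j,e)$ in $K$ equals that of its image in $K'$ under the paratopism, such a bijection on $V_1$ exists, and by construction $\gamma$ preserves colours and adjacencies.

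For the reverse direction, given a colour-preserving isomorphism $\gamma:G_K\to G_{K'}$, extract $\pi\in S_3$ from its action on $\{R,C,S\}$ and bijections $\alpha,\beta,\sigma$ from its action on the three blocks of $V_2$. For every entry vertex $v_\ell$ of $G_K$ adjacent to $r_i,c_j,s_e$, the image $\gamma(v_\ell)$ lies in $V_1$ and is adjacent to $\gamma(r_i),\gamma(c_j),\gamma(s_e)$; reading off these triples with the multiplicities recorded by the number of $V_1$-vertices sharing each neighbourhood shows that the multiset of row--column--symbol triples of $K'$ is exactly the image of that of $K$ under $(\alpha,\beta,\sigma,\pi)$. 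Hence $K$ and $K'$ are paratopic.

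The one delicate point, which I would flag explicitly, is the handling of repeated symbols within a cell: several vertices of $V_1$ then share the same triple of neighbours in $V_2$. These vertices are combinatorially indistinguishable (same colour, identical neighbourhood), so any bijection among them extends a paratopism to a graph isomorphism and, conversely, any isomorphism restricted to such a block is automatically consistent with the paratopism read off from $V_2\cup V_3$. This is the main thing to verify, but it reduces to the observation that the $y$-index used in defining $\ell$ serves only to distinguish otherwise identical vertices and carries no combinatorial content.
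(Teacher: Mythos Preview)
Your argument is correct and is precisely the detailed verification the paper leaves implicit when it says the lemma ``easily follows'' from the fact that $\gamma$ preserves adjacencies and colour classes. In particular, your identification of the three colour classes $V_1,V_2,V_3$, the observation that adjacency to $R,C,S$ forces the row/column/symbol blocks of $V_2$ to be permuted by the same $\pi\in S_3$, and your treatment of repeated entries via equal-neighbourhood $V_1$-vertices are exactly the points that make the ``easy'' claim rigorous.
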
 

We now take our canonical label to be 
$\mathcal{C}(K) = \mathcal{C}(G_K)$. 
We begin with a partial $k$-latin square of order $n$ with a
single row filled in, and then proceed by adding one row at a time.
For small $n$, $k$ Table 1 shows the number of main classes of 
$k$-latin squares of order $n$, and how many of these are
erodable, separable and simple.

\begin{center}
\begin{tabular}{|r|r|r|r|r|r|}
\hline
$n$ & $k$ & main classes & erodable & separable & simple \\
\hline
3 & 1 & 1 & 0 & 0 & 1 \\
\hline
3 & 2 & 4 & 3 & 3 & 1 \\
\hline
3 & 3 & 9 & 9 & 9 & 1 \\
\hline
3 & 4 & 24 & 22 & 24 & 0 \\
\hline
3 & 5 & 50 & 50 & 50 & 0 \\
\hline
3 & 6 & 117 & 115 & 117 & 0 \\
\hline
3 & 7 & 237 & 237 & 237 & 0 \\
\hline
3 & 8 & 488 & 485 & 488 & 0 \\
\hline
3 & 9 & 924 & 924 & 924 & 0 \\ 
\hline
4 & 1 & 2 & 0 & 0 & 2 \\
\hline
4 & 2 & 44 & 26 & 26 & 10 \\
\hline
4 & 3 & 2424 & 2181 & 2181 & 2 \\
\hline
4 & 4 & 218632 & 212942 & 218198 & 1 \\
\hline
5 & 1 & 2 & 0 & 0 & 2  \\
\hline
6 & 1 & 12 & 0 & 0 & 12 \\
\hline
\end{tabular}

\end{center}

\vspace{1mm}

\centerline{Table 1}

  \let\oldthebibliography=\thebibliography
  \let\endoldthebibliography=\endthebibliography
  \renewenvironment{thebibliography}[1]{%
    \begin{oldthebibliography}{#1}%
      \setlength{\parskip}{0.4ex plus 0.1ex minus 0.1ex}%
      \setlength{\itemsep}{0.4ex plus 0.1ex minus 0.1ex}%
  }%
  {%
    \end{oldthebibliography}%
  }

\section{Applications}

In this section we briefly discuss some possible applications of multi-latin squares to the design of statistical experiments.

Suppose that we want to compare $n$ varieties of tomato, $n$ types of compost and $n$ watering schemes, and we have $n^2k$ plots to do this in. 
 If we use a multi-latin square, assigning the varieties to rows, composts to columns and watering schemes to symbols, then each variety occurs $k$ times with each compost, each variety occurs $k$ times with each watering scheme, and each compost occurs $k$ times with each watering scheme.  This is a good design if we can assume that there are no {\em interactions}, which means that the 
difference 
in performance
between two varieties of tomato does not depend on the type of compost or 
the watering scheme, and similarly for two types of compost or two 
watering schemes. Such a design would be called an orthogonal 
main-effects factorial design for three $n$-level treatment factors.

For practical purposes, this wouldn't be done for $k\geq n$, because then it 
would be better to ensure that all of the potential $n^3$ combinations 
occurred at least once.

For a second sort of design, instead of watering schemes, 
suppose that we are going to use $n$ glasshouses, with $nk$ chambers in each 
glasshouse.  Even if we are not interested in the differences between the 
glasshouses, using a multi-latin square with the varieties, composts and 
glasshouses assigned to the rows, columns and symbols in some order gives 
us a good orthogonal main-effects factorial design for two $n$-level 
treatment factors in $n$ blocks of size $nk$.

\vspace{0.5cm}

\noindent {\bf \large Acknowledgments}:
We wish to thank Rosemary Bailey for her comments on the possible application of multi-latin squares to the efficient design of statistical experiments, which we have paraphrased in Section 6.

\end{document}